\documentclass{article}
\usepackage[utf8]{inputenc}
\usepackage[english]{babel}
\usepackage{graphicx}
\usepackage{amssymb}
\usepackage{amsmath}
\graphicspath{{images/}{../images/}}
\usepackage{amsthm}
\newtheorem{theorem}{Theorem}
\newtheorem{corollary}{Corollary}
\newtheorem{lemma}{Lemma}
\usepackage[T1]{fontenc}
\usepackage{scrextend}
\usepackage{mathtools}

\DeclarePairedDelimiter\floor{\lfloor}{\rfloor}
\newcommand\restr[2]{\ensuremath{\left.#1\right|_{#2}}}

\usepackage{blindtext}
 
\title{On the Intersection Number of Transformed Digraphs}
\author{Diljit Singh}
\date{ }
 
\begin{document}

\maketitle

\section*{Abstract} 

For any simple digraph $D$ we offer a new proof for the intersection number of its middle digraph, $M(D)$; while doing so we also solve for the intersection number when $D$ has loops. In addition, a new transformation, the union of $D$ and its subdivision digraph, is introduced and its intersection number calculated in full generality. For the total digraph, we extend previous arguments letting us solve for the intersection number of $T(D)$ with $D$ possibly having loops, but under the restriction loops of the digraph only touch (are to or from) themselves, sinks, and sources.

\section*{Introduction}
\indent A set representation of a graph is a family of sets with a particular rule which determines an edge in the graph. It is natural to take our vertices to be the sets, and our criteria for an edge to be the intersection of $2$ sets. In his seminal $1945$ work in intersection graph theory, Marczewski [4] formulated the above definition of the intersection graph along with proving every graph is the intersection graph of some sets. Given a graph, $G$, this begs the question: what is the minimal size of the set $U$ such that $G$ is the intersection graph of subsets of $U$? This is known as the intersection number of a graph. This was considered by Paul Erd\H{o}s, Adolph Winkler Goodman, and Louis Pósa [2] who in $1966$ proved that for any graph on $n$ vertices there exists a set with $\floor*{\frac{n^2}{4}}$ elements that is the set representation of $G$ and furthermore this is the smallest such number if the only information we are given is $G$ has $n$ vertices. This was proved by showing the intersection number is equal to the minimum number of complete subgraphs needed to cover $G$.

In this paper we consider the intersection number of digraphs, the theory of which has a number of analogous results to the above. If we let $U$ be a set with $\mathcal{F}$ a family of ordered pairs of subsets, with each ordered pair representing a vertex, $v$, and of the form $(S_v,T_v)$ then the intersection digraph of $\mathcal{F}$ has a vertex for every ordered pair and has an arc from $x$ to $y$ iff $S_x \cap T_y$ is not empty.

The intersection number of a digraph is defined as the minimum size of a set $U$ where our digraph, $D$, is the intersection digraph of ordered pairs of subsets of $U$. Thanks to work done in $1982$ by Beineke and Zamfirescu [1] and $1989$ by Sen et al. [5] it was shown that every digraph is the intersection digraph of ordered pairs of subsets of a set $U$. Sen et al. also introduced the concept of a generalized complete bipartite subdigraph (GBS), which are digraphs that have bipartite graphs (possibly with loops) as underlying graphs; furthermore, they must be subgraphs of the graph whose intersection number we are trying to find. Similar to the results of Erd\H{o}s and his coauthors, Sen et al. proved that the intersection number of a digraph, $D$, is equal to the minimum number of GBSs required to cover the arcs of $D$. 

The transformations we consider are the line digraph, subdivision digraph, middle digraph, and the total digraph. In addition we study the union of the subdivision digraph and the original digraph, we will call this $N(D)$. The motivation for these transformations lies in the adjacency structure of the original digraph $D$. These graphs are all defined in the next section.

Zamfirescu [6] proves a number of results concerning the intersection number of these transformations. Namely the intersection number of the line digraph and the subdivision digraph was completely solved for and results for the middle and total digraphs were given. In particular the intersection number of the middle and total digraphs was solved in the case that $D$ has no loops. For the middle digraph, we solve for the intersection number when $D$ has loops, we also provide a general construction to writing all minimal GBSs covers of $M(D)$ (along with all other transformations we consider). We also find the intersection number of $N(D)$ for $D$ possibly with loops. For the total digraph, we solve for the intersection number when $D$ has loops but limited to when every loop can only connect with, other than itself, only sources or sinks. 

\section*{Definitions}
    \begin{itemize}
        \item 
            A Generalized complete Bipartite Subdigraph(GBS) denoted: 
                    \[{x_1,x_2,\cdots,x_m} \choose {y_1,y_2,\cdots,y_n}\]  
            represents the set of $mn$ edges (or arcs, we use these two interchangeably) from each $x_i$ to each $y_i$. Note we can have $x_i=y_j$ for some $i$ and $j$. Alternatively we may denote it more compactly as: 
                    \[{\{X\}} \choose {\{Y\}}\] 
            where $\{X\}=\{{x_1,x_2,\cdots,x_m}\}$ and $\{Y\}=\{{y_1,y_2,\cdots,y_m}\}$.
        \item 
            Let $i(D)$ be the intersection number of $D$.
        \item 
            Denote the number of sinks and sources in a graph, $D$, by $A(D)$ and $B(D)$ respectively. Let $v(D)$ be the cardinality of the vertex set of $D$, and the number of loops in the graph $L_D$. If $D$ is understood then $A$, $B$, $v$, and $L$ are used. 
        \item
            Let $C(D)=v(D)-A(D)-B(D)$. Namely $C(D)$ is the number of vertices that are neither sinks nor sources. We will call these vertices \textbf{connectors.} Note loops are connectors. $C$ is used when $D$ is understood.
        \item The line digraph of $D$, $L(D)$, has vertex set $E(D)$, that is the edges of $D$. and an edge from $x$ to $y$ iff in $D$ the arc $x$ ends at the start of arc $y$.
        \item The subdivision digraph of $D$, $S(D)$, is the digraph obtained from placing a vertex on every arc of $D$, subdividing every arc into $2$ arcs which preserve the direction of the parent arc.
        \item The middle digraph of $D$, $M(D)$, is made from adding onto $S(D)$ the edges of $L(D)$.
        \item The total digraph of $D$, $T(D)$, is made from adding onto $M(D)$ the edges of $D$.
        \item 
            Let $D_L$ be the subgraph of $D$ where all loops are deleted.
        \item 
            Let $T^-(D)=N(D)+L(D)_L$. That is, $T^-(D)$ is the total digraph with loops from the line graph deleted.  
        \item 
            Let $\text{Loops}(D)$ be the graph s.t.
                    \[V(\text{Loops}(D))=\{v\in V(D) : (v,v)\in E\}\] 
            and 
                    \[E(\text{Loops}(D))=\{(v,v) : v\in V(\text{Loops}(D))\}.\]
        \item 
            Let $N(D)$ be the digraph that has vertex set $V(D)\cup E(D)$ (that is the vertex set of $D$ union the edge set of $D$) with $(a,b)$ an arc of $N(D)$ iff $(a,b)$ is an arc in $D$ or in $S(D)$. This transformation is very similar to the middle digraph of $D$ in the sense that if we add the edges of the line digraph we get $T(D)$.

\end{itemize}


\newpage


\section*{Main results}

\indent We begin by recalling a few relevant results. The first of which reduces finding the intersection number to finding the number of GBSs required to cover the graph.
\newline
\newline
\noindent\textbf{Theorem A.} (M. Sen, S. Das, A.B. Roy, D.B. West \text{[5]})\text{.} \textit{The intersection number of a digraph is the minimum number of GBSs required to cover its arcs.}
\newline 
\newline
\indent In $1964$ the notion of the Heuchenne condition (H-condition) was introduced [3]. A digraph satisfies the H-condition iff for every $u,v,w,x\in V(D)$, $vw,uw,ux\in E(D)$ implies $vx\in E(D)$ (our vertices need not be distinct). A characterization of line digraphs by the $H$-condition was also given then, namely it was proved that:
\newline
\newline
\noindent\textbf{Theorem B.} (Heuchenne \text{[3]})\text{.} \textit{$D$ is a line digraph iff it satisfies the H-condition.}
\newline 
\newline
\indent In her 2015 paper [6], Zamfirescu proved the subdivision digraph satisfies the $H$-condition. 
\newline
\newline
\noindent\textbf{Lemma C.} (Zamfirescu \text{[6]})\text{.} \textit{$S(D)$ satisfies the $H$-condition and has intersection number $2v-A-B$.}
\newline 
\newline
\indent In doing so it was established that $S(D)$ had a minimal unique cover, determined by the number of vertices, sources, and sinks, in the original digraph $D$.
\newline
\newline
\noindent\textbf{Lemma D.} (Zamfirescu \text{[6]})\text{.} \textit{If a digraph satisfies the $H$-condition the GBS cover is unique.}
\newline 

            The unique minimum set of GBSs for $S(D)$ is constructed by attaching to each vertex of $D$ a star at a sink, a star at a source, and $2$ GBSs at each connector: one for the incoming edges and one for outgoing. That is, for every vertex $x\in V(D)$ with incoming (resp. outgoing) arc set denoted $\{S_i\}$ (resp. $\{S_o\}$) every GBS associated to that vertex looks like:
                    
                    \[{\{S_i\}\choose x}, {x \choose \{S_o\}}.\] 
                    
            Note that if $x$ is a source (resp. sink) then $\{S_i\}=\emptyset$ (resp. $\{S_o\}=\emptyset$). In this case the former (resp. latter) GBS is non-existent and we only need to associate $1$ GBS to $x$. 
            
            This cover evolves into a minimum cover for $M(D)$ by the following:
                
                \begin{itemize}
                
                    \item
                        \textbf{If x does not have a loop:}
                    
                                \[{\{S_i\}\choose x} \rightarrow {\{S_i\}\choose x}\]
                            
                                \[{x \choose \{S_o\}} \rightarrow {\{S_i\},x \choose \{S_o\}}\]
                    \item 
                        \textbf{If x has a loop}, denote the loop by $x^2$, the GBSs below cover the added structure:
                                
                                \[{\{S_i\}\choose x} \rightarrow {\{S_i\}, x^2\choose x,x^2}\]
                                \[{x \choose \{S_o\}} \rightarrow {\{S_i\}, x, x^2 \choose \{S_o\}, x^2}.\]

                \end{itemize}

            Recall if any set, $\{S_i\}$ or $\{S_o\}$ is empty then we do not count that GBS.
            Alternatively by defining: $S^L_i=S_i\cup x^2$ and $S^L_o=S_i\cup x^2$ for every loop, $x^2$, we can re-write our GBSs in the case $x$ has a loop to:
    
                    \[{\{S^L_i\}\choose x, x^2}, {\{S^L_i\}, x \choose \{S^L_o\}}.\]

            This shows that to cover $M(D)$ we require for each vertex in $D$ a total of $2$ GBSs if both  $\{S_i\}$ and $\{S_o\}$ are non-empty and $1$ GBS when either $\{S_i\}$ or $\{S_o\}$ are empty. $\{S_i\}$ and $\{S_o\}$ are empty iff $x$ is a source in $D$ or a sink in $D$ respectively. So by this algorithm we require: 
                    \[2v(D)-A(D)-B(D)\] 
            GBS to cover $M(D)$. This proves: 
            
\begin{lemma}
\[i(M)\leq i(S).\]  
\end{lemma}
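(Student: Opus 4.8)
The inequality follows as soon as we exhibit a GBS cover of $M(D)$ with at most $i(S)$ members, because by Theorem A the intersection number is exactly the minimum size of such a cover. Since Lemma C gives $i(S)=2v-A-B$, the plan is to construct a cover of $M(D)$ consisting of precisely $2v-A-B$ GBSs. The natural source of such a cover is the unique minimum cover of $S(D)$ furnished by Lemmas C and D, which attaches to each vertex $x$ of $D$ the GBS $\binom{\{S_i\}}{x}$ (present unless $x$ is a source) and the GBS $\binom{x}{\{S_o\}}$ (present unless $x$ is a sink), where $\{S_i\}$ and $\{S_o\}$ denote the arcs of $D$ entering, respectively leaving, $x$.

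The key observation is that $M(D)$ is obtained from $S(D)$ by adjoining the arcs of $L(D)$, and every such arc has the form $(S_i,S_o)$ for some vertex $x$ of $D$ with $S_i$ an in-arc of $x$ and $S_o$ an out-arc of $x$. So it suffices to modify the canonical $S(D)$-cover, vertex by vertex, so that for each $x$ all arcs from the in-arcs of $x$ to the out-arcs of $x$ become covered, without introducing any new GBS. I would do this by enlarging the ``outgoing'' GBS at $x$ from $\binom{x}{\{S_o\}}$ to $\binom{\{S_i\},x}{\{S_o\}}$, while leaving the ``incoming'' GBS $\binom{\{S_i\}}{x}$ unchanged; when $x$ is a source or a sink only one of the two GBSs exists and the modification is vacuous.

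I would then verify the two points that make this work. First, each enlarged family is again a GBS, i.e.\ a bipartite subdigraph of $M(D)$: the only arcs it adds relative to $\binom{x}{\{S_o\}}$ are those from the in-arcs of $x$ to the out-arcs of $x$, and these are exactly arcs of $L(D)\subseteq M(D)$, and the underlying graph is still bipartite with the obvious two-part split. Second, the modified family covers $M(D)$: every arc of $S(D)$ is covered just as before, and every added arc $(S_i,S_o)$ at $x$ lies in $\binom{\{S_i\},x}{\{S_o\}}$. The GBS count is unchanged -- one GBS per source or sink and two per connector, totalling $A+B+2C=2v-A-B$ -- which yields $i(M)\le 2v-A-B=i(S)$.

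The step I expect to be the main obstacle is the loop case. If $x$ carries a loop $x^2$, then the subdivision vertex of that loop lies simultaneously among the in-arcs and the out-arcs of $x$, and $L(D)$ then contributes the loop $(x^2,x^2)$ together with $(x,x^2)$, $(x^2,x)$ and the arcs joining $x^2$ to the remaining in- and out-arcs of $x$; a careless enlargement can fail to cover $(x^2,x)$ or can break the bipartite-with-loops structure. Here one must replace the two GBSs at $x$ by the slightly larger $\binom{\{S_i\},x^2}{x,x^2}$ and $\binom{\{S_i\},x,x^2}{\{S_o\},x^2}$ -- equivalently, work throughout with $S^L_i=\{S_i\}\cup\{x^2\}$ and $S^L_o=\{S_o\}\cup\{x^2\}$ -- and check that the underlying graphs remain bipartite with loops and that every one of $(x^2,x^2)$, $(x,x^2)$, $(x^2,x)$, $(S_i,x^2)$ and $(x^2,S_o)$ is now covered. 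Since a looped vertex is still a connector with both $\{S_i\}$ and $\{S_o\}$ nonempty, this still uses exactly two GBSs at $x$, so the total remains $2v-A-B$ and the bound $i(M)\le i(S)$ is unaffected.
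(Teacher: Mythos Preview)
Your proposal is correct and follows essentially the same approach as the paper: you start from the unique minimum cover of $S(D)$ and enlarge the outgoing GBS at each non-loop vertex to $\binom{\{S_i\},x}{\{S_o\}}$, and at each looped vertex you use precisely the paper's two GBSs $\binom{\{S_i\},x^2}{x,x^2}$ and $\binom{\{S_i\},x,x^2}{\{S_o\},x^2}$. One minor slip: the arcs $(x,x^2)$ and $(x^2,x)$ belong to $S(D)$, not $L(D)$, but since they are arcs of $M(D)$ that your two GBSs do cover, this misattribution does not affect the argument.
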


Note that $E(M(D))=E(S(D))\cup E(L(D))$ and $S(D)$ consists of edges defined from $D$ by the adjacency of arcs to vertex and vertex to arcs only, while $L(D)$ has only arc to arc edges. In particular we have no vertex to vertex edges in $M(D)$. Let $G$ be any fixed set of GBS coverings of $M(D)$. Then $g_i\in G$ has exactly one of the two forms: 
                    \[{A_i \choose V_i, A'_i},{A_i, V_i \choose A'_i}. \]
            Where $V_i$, $A_i$, and $A'_i$ are respectively sets of vertices, arcs, and arcs, in $D$. We now produce a covering of $S(D)$, denoted $\restr{G}{S}$, from our set $G$ by the following steps:
            
                \begin{enumerate}
                    
                    \item 
                        First we reduce the set $G$ to $G_1$. We do so by deleting every $g_i\in G$ that has empty $V_i$ set. We can do this because those GBSs don't intersect with $S(D)$. Note that $|G_1|\leq |G|$.
    
                    \item 
                        Since all vertex to arc or arc to vertex edges are from $S(D)$ and only $S(D)$, we can reduce every GBS, $g_i\in G_1$, to a reduced version denoted, $g'_i$. We do this by the below map:
                            
                            \begin{itemize}
                            
                                \item 
                                    If $g_i$ is of the form: $\displaystyle{A_i \choose V_i, A'_i}$ then $g'_i= \displaystyle{A_i \choose V_i}$.
                                \item 
                                    If $g_i$ is of the form: $\displaystyle{A_i, V_i \choose A'_i}$ then $\displaystyle g'_i= {V_i \choose A'_i}$.
                            \end{itemize}
                    
                    \item 
                        Let $G'$ be the set of reduced covers. Then we have $|G'| = |G_1|\leq |G|$.
                    \item 
                        Notice that $\restr{G}{S}=G'$. This is because it has exactly the edges of $M(D)$ that connect vertices of $D$ to or from from arcs of $D$. Those edges are exactly $S(D)$. So we have a GBS cover of $S(D)$ from a GBS cover of $M(D)$.

                \end{enumerate}

            If $i(M(D))$ is less than $2v(D)-A(D)-B(D)$ then we have $i(S(D))$ is less than $2v(D)-A(D)-B(D)$, but by lemma $C$ this is impossible. We have shown:
\begin{lemma} 
\[i(M)\geq i(S).\]     
\end{lemma}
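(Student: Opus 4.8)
The plan is to establish the equivalent inequality $i(S) \le i(M)$ by converting an arbitrary GBS cover $G$ of $M(D)$ — in particular a minimum one — into a GBS cover of $S(D)$ whose size does not exceed $|G|$. The structural facts I would lean on are that $E(M(D)) = E(S(D)) \cup E(L(D))$, that every arc of $S(D)$ runs between a vertex of $D$ and an arc of $D$ (in one of the two directions), that every arc of $L(D)$ runs between two arcs of $D$, and — the key point — that $M(D)$ has no arc joining two vertices of $D$.

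First I would unpack that last fact: in any GBS $\binom{\{X\}}{\{Y\}}$ of the cover, $\{X\}$ and $\{Y\}$ cannot both contain a vertex of $D$, since that would demand a vertex-to-vertex arc in $M(D)$. Hence each member of $G$ has one of the two shapes $\binom{A_i}{V_i, A'_i}$ or $\binom{A_i, V_i}{A'_i}$, with $A_i, A'_i \subseteq E(D)$ and $V_i \subseteq V(D)$ possibly empty. Then I would reduce $G$ in two moves: (i) delete every member with $V_i = \emptyset$, which costs nothing since such a GBS consists only of arc-to-arc arcs and so meets no arc of $S(D)$; and (ii) replace each surviving member by its vertex--arc part, i.e. $\binom{A_i}{V_i, A'_i}\mapsto\binom{A_i}{V_i}$ and $\binom{A_i, V_i}{A'_i}\mapsto\binom{V_i}{A'_i}$. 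Move (i) only removes members and move (ii) is at worst injective on what remains, so the resulting family $G'$ satisfies $|G'|\le |G|$.

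The substantive step — and the one I expect to be the only real obstacle — is checking that $G'$ is genuinely a GBS cover of $S(D)$. For "genuine GBS": every arc demanded by $\binom{A_i}{V_i}$ (resp. $\binom{V_i}{A'_i}$) was already an arc of the parent GBS, hence an arc of $M(D)$, and being a vertex--arc arc it lies in $S(D)$, so each piece of $G'$ is a complete bipartite subdigraph of $S(D)$. For "cover": given an arbitrary arc $e$ of $S(D)$ — say $e=(a,v)$ with $a$ an arc and $v$ a vertex of $D$, the case $(v,a)$ being symmetric — $e$ lies in some $g_i\in G$; because the head $v$ is a vertex of $D$, $g_i$ must be of the first shape with $a\in A_i$ and $v\in V_i$, so $e$ survives in $\binom{A_i}{V_i}\in G'$.

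Having produced $G'$ we get $i(S)\le |G'|\le |G| = i(M)$, which is exactly the claim. As a byproduct, applying the construction to a hypothetical cover of $M(D)$ of size below $2v-A-B$ would contradict $i(S)=2v-A-B$ from Lemma C, so together with Lemma 1 this even pins down $i(M)=2v-A-B$. I would stress that intersection number is not monotone under passing to arbitrary subdigraphs, so the argument genuinely relies on the special "no vertex--vertex arc" shape of $M(D)$ rather than on any general principle.
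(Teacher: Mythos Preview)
Your proposal is correct and follows essentially the same argument as the paper: classify every GBS in a cover of $M(D)$ into the two shapes $\binom{A_i}{V_i,A'_i}$ and $\binom{A_i,V_i}{A'_i}$ using the absence of vertex--vertex arcs, discard those with $V_i=\emptyset$, and project the rest onto their vertex--arc part to obtain a cover of $S(D)$ of no greater size. Your write-up is in fact somewhat more careful than the paper's, in that you explicitly verify both that each reduced piece is a genuine subdigraph of $S(D)$ and that every $S(D)$-arc survives the reduction.
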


By the above two results we get the intersection number of the middle transformation of a digraph, $D$. Namely it was shown: 

\begin{theorem}
\[M(D)=S(D)=2v-A-B.\]
\end{theorem}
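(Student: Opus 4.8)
The plan is to simply combine the three results already in hand: Lemma 1, Lemma 2, and Lemma~C. Lemma 1 gives $i(M)\leq i(S)$ — this was obtained by exhibiting an explicit GBS cover of $M(D)$ built from the canonical cover of $S(D)$, evolving each star/pair at a vertex $x$ according to whether $x$ carries a loop, and checking the count comes out to $2v-A-B$. Lemma 2 gives the reverse inequality $i(M)\geq i(S)$ by the restriction operation $G\mapsto\restr{G}{S}$, which turns any GBS cover of $M(D)$ into one of $S(D)$ of no larger size (delete the loop-at-arc GBSs that miss $S(D)$ entirely, then trim the arc-to-arc parts off the rest). Putting these together forces $i(M)=i(S)$.

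The second step is to invoke Lemma~C, which states $i(S(D))=2v-A-B$; substituting into $i(M)=i(S)$ yields $i(M(D))=2v-A-B$, which is exactly the content of the theorem (with the abbreviation $M(D)=S(D)=2v-A-B$ standing for the corresponding equalities of intersection numbers). I would also remark that the argument carries through regardless of whether $D$ has loops: both the construction in Lemma 1 (which explicitly handles the loop case via the sets $S^L_i$, $S^L_o$) and the restriction argument in Lemma 2 (which never assumes loop-freeness) are loop-agnostic, so the formula $2v-A-B$ holds for every simple digraph $D$, loops allowed.

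There is essentially no remaining obstacle — the theorem is a corollary of the preceding lemmas — so the only thing to be careful about is bookkeeping: confirming that the chain $2v-A-B=i(S)\leq i(M)$ (from Lemma~C and Lemma 2) and $i(M)\leq i(S)=2v-A-B$ (from Lemma 1 and Lemma~C) are both valid in the presence of loops, i.e. that $v$, $A$, $B$ in Lemma~C are the same quantities used in Lemma 1's count. Since $A$ counts sinks and $B$ counts sources of $D$, and a looped vertex is neither a sink nor a source, the counts are consistent, and the two inequalities pinch $i(M)$ to the claimed value.
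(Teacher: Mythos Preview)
Your proposal is correct and matches the paper's approach exactly: the theorem is stated as an immediate consequence of Lemma~1 ($i(M)\leq i(S)$), Lemma~2 ($i(M)\geq i(S)$), and Lemma~C ($i(S)=2v-A-B$), with no additional argument needed. Your extra remarks about the loop-agnostic nature of the construction and the consistency of the counts $v$, $A$, $B$ are accurate and in the spirit of the paper, which emphasizes that this result extends the loop-free case handled previously.
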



We can further exploit the proof of lemma $2$ to get a lower bound of $2v-A-B$ on the intersection number of $N(D)$.

Note that $E(N(D))=E(S(D))\cup E(D)$ and $S(D)$ consists of edges defined from $D$ by the adjacency of arcs to vertex and vertex to arcs only, while $D$ has only vertex to vertex edges. In particular we have no arc to arc edges in $N(D)$. Let $G$ be a fixed set of GBS coverings of $N(D)$. Then $g_i\in G$ has exactly one of the two forms: 
                    \[{V_i \choose A_i, V'_i},{V_i, A_i \choose V'_i}. \]
            Where $V_i$, $V'_i$, and $A_i$ are respectively sets of vertices, vertices, and arcs, in $D$. We now produce a covering of $S(D)$, denoted $\restr{G}{S}$, from our set $G$ by the following steps:
             \begin{enumerate}
                    
                    \item 
                        First we reduce the set $G$ to $G_1$. We do so by deleting every $g_i\in G$ that has empty $A_i$ set. We can do this because those GBSs don't intersect with $S(D)$. Note that $|G_1|\leq |G|$.
    
                    \item 
                        Since all vertex to arc or arc to vertex edges are from $S(D)$ and only $S(D)$, we can reduce every GBS, $g_i\in G_1$, to a reduced version denoted, $g'_i$. We do this by the below map:
                            
                            \begin{itemize}
                            
                                \item 
                                    If $g_i$ is of the form: $\displaystyle{V_i \choose A_i, V'_i}$ then $g'_i= \displaystyle{V_i \choose A_i}$.
                                \item 
                                    If $g_i$ is of the form: $\displaystyle{V_i, A_i \choose V'_i}$ then $\displaystyle g'_i= {A_i \choose V'_i}$.
                            \end{itemize}
                    
                    \item 
                        Let $G'$ be the set of reduced GBSs. Then we have $|G'|= |G_1|\leq |G|$.
                    \item 
                        Notice that $\restr{G}{S}=G'$. This is because it has exactly the edges of $N(D)$ that connect vertices of $D$ to or from arcs of $D$. Those edges are exactly $S(D)$. So we have a GBS cover of $S(D)$ from a GBS cover of $N(D)$.

                \end{enumerate}

            If $i(N(D))$ is less than $2v(D)-A(D)-B(D)$ then we have $i(S(D))$ is less than $2v(D)-A(D)-B(D)$, but by lemma $C$, this is impossible. We have shown: 
            
\begin{lemma}
   \[i(N)\geq i(S).\]                 
\end{lemma}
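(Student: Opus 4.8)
The plan is to reuse, essentially verbatim, the deletion-and-projection argument that established Lemma 2, now exploiting the structure of $N(D)$ rather than $M(D)$. The relevant structural fact is that $E(N(D)) = E(S(D)) \cup E(D)$, where every arc of $S(D)$ joins a vertex of $D$ to an arc of $D$ (in one direction or the other) while every arc of $D$ joins two vertices of $D$; consequently $N(D)$ has \emph{no} arc-to-arc adjacency. Since a GBS is a generalized complete bipartite subdigraph, this forces each GBS occurring in a cover of $N(D)$ to carry arcs of $D$ on at most one of its two sides, for otherwise an arc on one side would be adjacent to an arc on the other, an adjacency that $N(D)$ does not possess. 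Hence, fixing a GBS cover $G$ of $N(D)$, every $g_i\in G$ has one of the two forms ${V_i \choose A_i, V'_i}$ or ${V_i, A_i \choose V'_i}$, with $V_i, V'_i \subseteq V(D)$ and $A_i \subseteq E(D)$ (possibly empty). This is the exact analogue of the dichotomy used for $M(D)$, with the roles of ``vertex'' and ``arc'' interchanged, and it is noted in the text preceding the statement.

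From there I would proceed in three bookkeeping steps. First, delete from $G$ every $g_i$ with $A_i = \emptyset$: such a GBS contains only vertex-to-vertex arcs of $D$ and meets no arc of $S(D)$, so its removal does not disturb the coverage of $S(D)$; call the result $G_1$, so $|G_1| \le |G|$. Second, project each surviving $g_i$ onto its $S(D)$-part, replacing ${V_i \choose A_i, V'_i}$ by ${V_i \choose A_i}$ and ${V_i, A_i \choose V'_i}$ by ${A_i \choose V'_i}$; this discards only vertex-to-vertex arcs, which lie in $E(D)$ and not in $E(S(D))$, and it retains exactly the vertex-to-arc and arc-to-vertex arcs. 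Writing $G'$ for the resulting family, $|G'| = |G_1| \le |G|$. Third, observe that $G'$ covers $S(D)$: the vertex-to-arc and arc-to-vertex adjacencies of $N(D)$ are precisely the arcs of $S(D)$, each such arc was covered in $G$ by some $g_i$ that necessarily has $A_i \neq \emptyset$ (one endpoint of the arc is an arc of $D$, so it must appear in $g_i$), and that arc is retained under the projection, hence covered by $g'_i \in G'$. Taking $G$ to be a minimum cover of $N(D)$ yields $i(S) \le |G'| \le |G| = i(N)$, which is the claim.

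The only step that requires genuine argument rather than bookkeeping is the dichotomy itself, namely that no GBS in a cover of $N(D)$ can carry arcs of $D$ on both sides; I expect this, not the deletion or projection, to be the substantive point. It follows immediately once one records that $N(D) = S(D) + E(D)$ contains no arc-to-arc adjacency, after which the remainder mirrors the procedure already executed for $M(D)$. It is worth remarking that the inequality $i(N) \ge i(S)$ obtained this way needs neither Lemma C, nor Lemma D, nor the $H$-condition: Lemma C enters only afterwards, to identify the common lower bound as $2v - A - B$.
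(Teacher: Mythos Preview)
Your proposal is correct and follows essentially the same deletion-and-projection argument as the paper: observe that $N(D)$ has no arc-to-arc adjacencies so each GBS has arcs of $D$ on at most one side, discard GBSs with empty $A_i$, project the rest onto their $S(D)$-part, and conclude that any cover of $N(D)$ yields a cover of $S(D)$ of no greater size. Your closing remark that Lemma~C is unnecessary for the bare inequality $i(N)\ge i(S)$ is accurate; the paper invokes it only to phrase the conclusion as a contradiction against the explicit value $2v-A-B$.
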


Next, we solve for the intersection number of $N(D)$ and show it is exactly the sum of vertices in $D$ which are not sources added to the number of vertices which are not sinks. 
 
\begin{theorem}
\[N(D)=S(D)=2v-A-B.\]
\end{theorem}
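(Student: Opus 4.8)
Lemma 4 together with Lemma C already gives $i(N)\ge i(S)=2v-A-B$, so the whole task is to exhibit a GBS cover of $N(D)$ of size $2v-A-B$; the plan is to adapt the cover of $M(D)$ constructed above. The relevant structural fact is that $E(N(D))=E(S(D))\cup E(D)$, where every edge of $S(D)$ joins a vertex of $D$ to an arc of $D$ and every edge of $D$ joins two vertices of $D$; thus the only edges of $N(D)$ not already present in $S(D)$ are the original arcs $(x,y)\in E(D)$, each of which sits in $N(D)$ as an arc from the vertex $x$ to the vertex $y$.

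Starting from the unique minimum cover of $S(D)$ from Lemma C --- for every $x\in V(D)$ with incoming arc set $\{S_i\}$ and outgoing arc set $\{S_o\}$, the GBSs ${\{S_i\}\choose x}$ and ${x\choose\{S_o\}}$, with the standing convention that a GBS with an empty side is dropped --- I would set $\{O_x\}=\{\,y\in V(D):(x,y)\in E(D)\,\}$ and replace each ${x\choose\{S_o\}}$ by ${x\choose\{S_o\}\cup\{O_x\}}$, leaving every ${\{S_i\}\choose x}$ untouched. To see this is a legitimate GBS cover of $N(D)$ I would check two things. First, ${x\choose\{S_o\}\cup\{O_x\}}$ is a subdigraph of $N(D)$: its arcs are $x\to e$ for $e\in\{S_o\}$ (subdivision arcs) and $x\to y$ for $y\in\{O_x\}$ (original arcs of $D$), all of which are present. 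Second, every edge of $N(D)$ is covered: a subdivision edge $x\to e$ lies in ${x\choose\{S_o\}\cup\{O_x\}}$, a subdivision edge $e\to x$ lies in ${\{S_i\}\choose x}$, and an original arc $z\to x$ lies in the modified GBS attached to $z$ because $x\in\{O_z\}$.

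Counting is then immediate: we have only enlarged GBSs already present in the $S(D)$-cover, and whenever $x$ is a sink we have $\{S_o\}=\{O_x\}=\emptyset$, so no GBS is created or destroyed; hence the cover still has $2v-A-B$ members, and with Lemma 4 this yields $i(N)=2v-A-B=i(S)$. I expect the only point requiring care to be the case analysis --- sinks, sources, loops, isolated vertices --- rather than any genuine obstacle: a vertex $x$ carrying a loop $(x,x)$ is a connector, its loop arc lies in both $\{S_i\}$ and $\{S_o\}$, it receives two GBSs, and the loop edge $x\to x$ is covered because $x\in\{O_x\}$; an isolated vertex is both a sink and a source and contributes no GBS, matching its contribution of $2-1-1=0$ to $2v-A-B$. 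The one substantive observation underlying all of this is that appending $\{O_x\}$ to the head of ${x\choose\{S_o\}}$ adds exactly the arcs of $D$ leaving $x$ and no others, so no spurious arc is introduced.
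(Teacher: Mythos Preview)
Your proof is correct and follows essentially the same route as the paper: both start from the unique $S(D)$-cover and absorb the arcs of $D$ by enlarging existing stars, you by appending the out-neighbours $\{O_x\}$ to each ${x\choose\{S_o\}}$, the paper by (redundantly) also appending the in-neighbours to ${\{S_i\}\choose x}$ at carriers. One housekeeping note: the lower-bound lemma you invoke is Lemma~3 in the paper's numbering, not Lemma~4.
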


\begin{proof}
For every vertex $x\in V(D)$ with incoming (resp. outgoing) arc set denoted $\{S_i\}$ (resp. $\{S_o\}$), every GBS in our minimum cover of $S(D)$ to that vertex looks like:
                    
                    \[{\{S_i\}\choose x}, {x \choose \{S_o\}}.\] 
                    
            Note that if $x$ is a source (resp. sink) then $\{S_i\}=\emptyset$ (resp. $\{S_o\}=\emptyset$). In this case the former (resp. latter) GBS is non-existent and we only need to associate $1$ GBS to $x$. 
            
            This cover evolves into a minimum cover for $N(D)$ by the below:
                \begin{itemize}
                    \item
                        If $x$ is a sink (resp. source) add the incoming (resp. outgoing) edges from $D$ to the respective GBS associated with $x$. This does not add any GBSs.
                    \item 
                        If $x$ is a carrier add the set of incoming edges to $\{S_i\}$ and outgoing edges to $\{S_o\}$. We again have not added any GBSs.
                \end{itemize}
            All extra edges from $D$ are accounted for and no extra GBSs were added to the set from $S(D)$. So we have proven $i(N(D))\leq i(S(D)).$

Along with lemma $3$ we are done.
\end{proof}


From now on, we deal with $D$ assuming it has loops, but vertices adjacent to all looped vertices are either that vertex itself, sources, or sinks. As always our $D$ has no multiple arcs. This simplification helps us prove a more general version of lemma $4$ in [1], stated below:
\newline
\newline
\noindent\textbf{Lemma E.} (Zamfirescu \text{[6]})\text{.} \textit{If $D$ has no loops and no multiple arcs, then no GBS of $T(D)$ can have two arcs from different GBSs of $S(D)$.}
\newline
\newline
\indent This is generally not true of $T(D)$, however we show that if we allow loops (touching sources or sinks), a weaker version is true for a subgraph of $T(D)$, namely $T^-(D)$ (recall that is the total digraph with loops from the line graph deleted). We then bound the worst-case error when constructing $T$ from $T^-$, giving a lower bound. Lastly, a cover achieving it is constructed showing equality.

The proof of this lemma also gives us the basis of a reduction principal when trying to calculate lower and upper bounds on the intersection number of the total digraph. Namely it tells us that the problem may be split into finding the intersection number of particular subgraphs and then gluing the subgraphs together. Simply adding the intersection number of the components gives an upper bound. Furthermore, the lemma establishes a lower-bound for the component of $T$ that arises from vertices of a certain distance from all loops.


\begin{lemma}
Two arcs from two different GBSs (in the minimal covering) of $S(D)$ can be in the same GBS of $T^-(D)$ iff both arcs contain the same vertex that has a loop. Furthermore, no GBS of $S(D)$ may vanish with this type of reduction.
\end{lemma}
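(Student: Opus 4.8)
The plan is to drive everything from a classification of the arcs of $T^-(D)$ by the types of their two endpoints. Writing $V=V(D)$ for the original vertices and $E=E(D)$ for the arc-vertices, each arc of $T^-(D)=S(D)+L(D)_L+D$ is of exactly one of four kinds: vertex-to-vertex (these occur only in the copy of $D$), vertex-to-arc and arc-to-vertex (these occur only in the copy of $S(D)$), and arc-to-arc (these occur only in $L(D)_L$, so they are never loops). The fact I would use over and over is that a vertex-to-arc arc of $T^-(D)$ must read $u\to(u,w)$ and an arc-to-vertex arc must read $(u,w)\to w$, and that no such arc occurs outside $S(D)$. Recall too that an arc of $S(D)$ of the form $x\to(x,y)$ lies in exactly the cover-GBS $O_x:={x\choose\{S_o\}}$, and an arc $(x,y)\to y$ lies in exactly $I_y:={\{S_i\}\choose y}$, where $\{S_i\},\{S_o\}$ are the in- and out-arc sets of the relevant vertex.

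For the forward direction, let $e_1,e_2$ be arcs of $S(D)$ in distinct cover-GBSs that both occur among the arcs of one GBS ${X\choose Y}$ of $T^-(D)$; write $e_1=a_1\to b_1$ and $e_2=a_2\to b_2$ with $a_1,a_2\in X$, $b_1,b_2\in Y$, so that the cross arcs $a_1\to b_2$ and $a_2\to b_1$ also belong to $T^-(D)$. I would first rule out that $e_1,e_2$ have the same orientation: if both are vertex-to-arc, say $e_i=x_i\to(x_i,y_i)$, then $x_1\ne x_2$ (as $e_i\in O_{x_i}$ and these are assumed distinct), yet the cross arc $x_1\to(x_2,y_2)$ is vertex-to-arc, hence in $S(D)$, forcing $x_1=x_2$; the arc-to-vertex subcase is symmetric. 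So, after relabelling, $e_1=x\to(x,y)\in O_x$ and $e_2=(w,z)\to z\in I_z$ (and any such pair automatically lies in distinct cover-GBSs, since an $O$ never equals an $I$). Now the cross arc $a_1\to b_2=x\to z$ is vertex-to-vertex, so $(x,z)\in E$; and $a_2\to b_1=(w,z)\to(x,y)$ is arc-to-arc, so the head $z$ of $(w,z)$ equals the tail $x$ of $(x,y)$, i.e. $z=x$. Hence $(x,x)\in E$: the common vertex $x$ carries a loop. One degenerate pair is worth flagging here: when $y=x$ and $w=x$, the arcs $x\to(x,x)$ and $(x,x)\to x$ lie in the distinct cover-GBSs $O_x,I_x$ and share the looped vertex $x$, yet cannot be combined, since the required cross arc $(x,x)\to(x,x)$ is exactly the loop deleted from $L(D)$; so the converse is understood for pairs other than this one (equivalently, halves of two distinct arcs of $D$).

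For the converse, given a looped vertex $x$ and arcs $e_1=x\to(x,y)$, $e_2=(w,x)\to x$ of $S(D)$ not forming a single subdivided loop, I would exhibit the GBS ${x,(w,x)\choose(x,y),x}$ of $T^-(D)$: its four arcs $x\to(x,y)$, $x\to x$, $(w,x)\to(x,y)$, $(w,x)\to x$ lie respectively in $S(D)$, in $D$ (as $(x,x)\in E$), in $L(D)_L$ (as $x$ is the head of $(w,x)$ and the tail of $(x,y)$, and $(w,x)\ne(x,y)$), and in $S(D)$; it contains $e_1\in O_x$ and $e_2\in I_x$, which are distinct. For the ``furthermore'', I would pin down the shape of any GBS ${X\choose Y}$ of $T^-(D)$ witnessing such a combination: by the forward analysis $X\cap V=Y\cap V=\{x\}$ (a further original vertex in $X$ or $Y$ would force a vertex-to-arc or arc-to-vertex arc outside $S(D)$), every arc of $X\cap E$ has head $x$, every arc of $Y\cap E$ has tail $x$, and the loop $(x,x)$ cannot lie in both $X$ and $Y$ (else the arc-to-arc loop $(x,x)\to(x,x)$ would be forced, contradicting looplessness of $L(D)_L$). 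Since $(x,x)$ lies in both $\{S_i\}$ and $\{S_o\}$ of $x$, this gives $X\cap E\subsetneq\{S_i\}$ or $Y\cap E\subsetneq\{S_o\}$; hence such a GBS can never contain all of $I_x$ together with all of $O_x$, and passing from a cover $G$ of $T^-(D)$ to $\restr{G}{S}$ in the style of the restriction argument of Lemma $2$ never erases a cover-GBS of $S(D)$.

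I expect the last part --- making ``no GBS of $S(D)$ may vanish'' precise, and in the exact form the later lower-bound argument for $i(T)$ needs --- to be the main obstacle. The edge-type bookkeeping pins any combining GBS to a single looped vertex, but one still has to argue that a single GBS of $T^-(D)$ cannot absorb two whole cover-GBSs of $S(D)$ at once; the crux is the small observation that the two halves of a subdivided loop can never coexist in one GBS of $T^-(D)$, precisely because the corresponding line-digraph loop has been removed.
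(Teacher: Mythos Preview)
Your proof is correct, and it is organized differently from the paper's. The paper first invokes the proof of Lemma~E to dispose of pairs of cover-GBSs attached to $D$-vertices at distance at least $2$, and then, using the standing hypothesis that every looped vertex is adjacent only to itself, sources, and sinks, runs through an explicit six-case enumeration of pairs of $S(D)$-arcs indexed by whether the $D$-vertex involved is a sink $A_i$, a source $B_i$, or a looped vertex $l$. Each case is disposed of by an ad hoc check that the required cross-arc either forces two $D$-vertex endpoints on a single arc-vertex, or an arc-to-arc loop, or an arc oriented the wrong way. You instead classify the arcs of $T^-(D)$ once and for all by endpoint type and argue uniformly from the cross-arc closure of a GBS: ruling out the same-orientation case in one line and then, in the mixed case, reading off $z=x$ and $(x,x)\in E(D)$ directly from the two cross-arcs. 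This is shorter, does not appeal to Lemma~E, and nowhere uses the loops-touch-only-sources/sinks restriction for the forward direction.

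For the ``furthermore'' clause you and the paper identify the same obstruction: the two halves $x\to(x,x)$ and $(x,x)\to x$ of a subdivided loop can never sit in one GBS of $T^-(D)$, since that would force the deleted line-digraph loop $(x,x)\to(x,x)$. The paper states this in one sentence; you make it structural by pinning down that any combining GBS ${X\choose Y}$ has $X\cap V=Y\cap V=\{x\}$, $X\cap E\subseteq\{S_i\}$, $Y\cap E\subseteq\{S_o\}$, with at least one inclusion strict. That sharper description is exactly what the downstream lower-bound argument needs, so your version is at least as useful. Your own caveat about making the ``vanish'' statement precise enough for the later use is well placed, but what you have already matches the paper's level of rigor there.
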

\begin{proof}

The proof of lemma $E$ shows that we may not split up any GBS defined by vertices (each GBS is associated to a particular vertex and the cover is unique by lemmas $C$ and $D$) that are distance $2$ or more (in $D$). Now we consider the arcs made from sources and sinks that (in $D$) touch the loops or each other. Let $x,y$ be in the same GBS in $T^-$ but different GBSs in $S$. Begin by noting that we may not have any vertices that represent $D$-arcs that have more than $1$ incoming (resp. outgoing) edge from (resp. to) any vertices that represent $D$-vertices. Also in $S(D)$ we never have loops. In the below, arcs of $D$ are denoted by numbers, sources by $B_i$ for some $i$, and sinks by $A_i$ for some $i$, and a loop is $l$ (when needed its arc may be denoted $l^2$). 

\begin{itemize}

\item
First we consider the $2$ cases below:
    \[{{1}\choose{A_1}},{{2}\choose{A_2}}\]
    \[{{B_1}\choose{1}},{{B_2}\choose{2}}\]
Note in both cases $1\not= 2$ or else an arc vertex has at least two incoming or outgoing edges to 2 $D$-vertices. Now, if we were to adjoin the two arcs in $T^-$, we would have the same issue if $1=2$, namely an arc vertex, in $S(D)$, has at least two incoming or outgoing edges to $D$-vertices.

\item 

\[{{1}\choose{A}},{{B}\choose{2}}\]

We can't have $1=2$, which would imply a loop at an arc, but we have none of those in $T^-$. Clearly a source is not a sink, therefore $A\not=B$ and we need to use the loop vertex as an intermediary to have an arc to arc connection. However, if this is the case, we have an edge that goes from $2$ to $1$ opposite the (in this case) required edge that is from $1$ to $2$. Since we can not have both edges, this case is impossible.

\item 

We consider the $2$ cases below:
    \[{{1}\choose{l}},{{2}\choose{A}}\]
    \[{{l}\choose{1}},{{B}\choose{2}}\]

Whether $1$ is the same as $2$ or not, we have that if we can (in either case) join the 2 GBSs then an arc vertex has at least two incoming, or outgoing edges to 2 $D$-vertices. That is, it is needed for us to have $2$ to go into both $l$ and $A$ in the first case and $l$ and $B$ to go to $2$ in the second case. It was already established this can not happen in $T^-$.

\item 

\[{{l}\choose{1}},{{2}\choose{A}}\]

  Loops are not sources or sinks so $l\not = A$, and if we have $1= 2$, we have a loop at an arc, which cannot happen. So assume $1 \not =2$, in $D$, say $2$ is the arc from $v$ to $A$ and $1$ is the arc from $l$ to $w$. Since $l$ goes to $1$ we have $w$ is a sink. However, then we can never have (even if $w=A$ or if $v=l$) $2$ go to $1$.
    
\item 
    
\[{{1}\choose{l}},{{B}\choose{2}}\]
    
    Loops are not sources or sinks so $l\not = B$, and if we have $1= 2$, we have a loop at an arc, which cannot happen. So assume $1 \not =2$, in $D$, say $1$ is the arc from $w$ to $l$ and $2$ is the arc from $B$ to $v$. Since $1$ goes to $l$ we have $w$ is a source. However, then we can never have (even if $w=B$ or if $v=l$) $1$ go to $2$.

\item 

\[{{1}\choose{l}},{{l}\choose{2}}\]

If $1=2$ then we have a loop at an arc which can not happen. Now, in this case we can combine the two arcs. Consider the $2$ GBS in $S(D)$ associated with $l$, they are: \[{{\{S_i\},l^2}\choose{l}},{{l}\choose{\{S_o\},l^2}}.\] Notice that every vertex in $\{S_i\}$ goes to every vertex in $\{S_0\}$ and since in $T^-$ we have a loop at $l$, we can have arcs from the two GBS going into one another. However, since we have no loop at $l^2$ we always require there to be $2$ GBSs here: one with $l^2$ to $l$ the other with $l$ to $l^2$.

\end{itemize}
\end{proof}

\begin{corollary}
The intersection number of $T^-$ is at least the intersection number of $S$.
\end{corollary}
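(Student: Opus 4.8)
The goal is to show $i(T^-)\ge 2v-A-B$, which by Lemma C equals $i(S)$. Recall that by Lemmas C and D the minimal GBS cover of $S(D)$ is unique: it consists of the GBSs $\sigma^{in}_x=\binom{\{S_i\}}{x}$ and $\sigma^{out}_x=\binom{x}{\{S_o\}}$ over all $x\in V(D)$ (with $\{S_i\}$, $\{S_o\}$ the incoming and outgoing arcs of $x$, and the empty ones discarded), and there are $2v-A-B$ of them. The plan is to fix an arbitrary GBS cover $G$ of $T^-(D)$ and construct an injection from this family of $2v-A-B$ GBSs into $G$; this yields $|G|\ge 2v-A-B$ for every cover, hence the corollary.

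First I would analyse, for a single GBS $g$ of $T^-(D)$, its \emph{$S(D)$-support}: the set of GBSs of the minimal cover of $S(D)$ that share an $S(D)$-edge with $g$. Every vertex-to-arc and arc-to-vertex edge of $T^-(D)$ comes from $S(D)$ alone, and $T^-(D)$ has no loop at any arc-vertex (those are exactly the loops of $L(D)$, which $T^-$ deletes), so Lemma 4 applies: two $S(D)$-edges lying in two different GBSs of the minimal cover can lie in one common GBS of $T^-(D)$ only if both are incident to a common looped vertex $v$, in which case those two GBSs are precisely $\sigma^{in}_v$ and $\sigma^{out}_v$. Since each $\sigma$ is the in- or out-GBS of exactly one vertex, this forces every $g\in G$ to have $S(D)$-support of size at most $2$, the size-$2$ supports being exactly the pairs $\{\sigma^{in}_v,\sigma^{out}_v\}$ over looped vertices $v$; in particular, any $g$ sharing an edge with a GBS of a non-looped vertex $y$ has $S(D)$-support equal to a single GBS of $y$.

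Next I would build the injection and verify it. For each GBS $\sigma$ of a non-looped vertex $y$, choose some $g\in G$ covering an edge of $\sigma$; by the previous step $\operatorname{supp}(g)=\{\sigma\}$, and we set $\sigma\mapsto g$. For each looped vertex $x$ with loop-arc $x^2$, the $S(D)$-edges $x^2\to x\in\sigma^{in}_x$ and $x\to x^2\in\sigma^{out}_x$ cannot lie in one GBS of $G$, since that would require a loop at $x^2$ in $T^-(D)$ — this is the ``furthermore'' half of Lemma 4 — so there are distinct $g_1,g_2\in G$ covering them respectively, each with support contained in $\{\sigma^{in}_x,\sigma^{out}_x\}$; set $\sigma^{in}_x\mapsto g_1$ and $\sigma^{out}_x\mapsto g_2$. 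For injectivity: if two distinct GBSs of the minimal cover were sent to the same $g$, both would lie in $\operatorname{supp}(g)$, so $\operatorname{supp}(g)$ would be a size-$2$ looped pair $\{\sigma^{in}_x,\sigma^{out}_x\}$ — but that pair was sent to the distinct $g_1,g_2$ — while a singleton support cannot receive two GBSs. Hence the map is injective and $|G|\ge 2v-A-B=i(S)$.

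The step I expect to be delicate is the first one: although it is essentially a repackaging of Lemma 4, one must handle carefully the ``mixed'' GBSs of $T^-(D)$ that place both $D$-vertices and arc-vertices on the same side, and rule out a GBS whose $S(D)$-support has size $3$ or more — this is exactly where the pigeonhole ``each $\sigma$ is an in- or out-GBS of a single vertex'' forces size at most $2$. Everything after the first step is a routine system-of-distinct-representatives argument; alternatively one may invoke Hall's theorem, noting that the size-$2$ supports form a matching on the set of $\sigma$'s, so the Hall condition is automatic, with the ``furthermore'' clause of Lemma 4 supplying the two distinct witnesses at each looped vertex.
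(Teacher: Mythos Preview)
Your argument is correct and is essentially the same approach as the paper's: the paper's proof is the single line ``By the above lemma it is trivial,'' and what you have written is a careful unpacking of exactly that implication, using both halves of Lemma~4 (the ``size at most $2$, only at a looped vertex'' part for the support analysis, and the ``furthermore, no GBS may vanish'' part for the two distinct witnesses $g_1\ne g_2$ at each loop). Your injection/SDR formulation is more explicit than anything the paper states, but it is the natural way to make the one-line claim rigorous, not a different route.
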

\begin{proof}
By the above lemma it is trivial.
\end{proof}


\begin{lemma}
The intersection number of $T^-(D)$ is less than or equal to that of $S(D)$.
\end{lemma}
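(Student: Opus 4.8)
The plan is to prove the reverse inequality $i(T^{-}(D)) \le i(S(D)) = 2v-A-B$ constructively. I would start from the \emph{unique} minimum GBS cover of $S(D)$ given by Lemmas C and D — one GBS at every source, one at every sink, two at every connector — and enlarge each of its GBSs so that the resulting family, still of size $2v-A-B$, covers all of $T^{-}(D)$. Since $E(T^{-}(D)) = E(S(D)) \cup E(D) \cup E(L(D)_{L})$, the only arcs to be added are the vertex-to-vertex arcs of $D$ and the arc-to-arc arcs of $L(D)_{L}$; a successful enlargement finishes the proof.

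For a connector $x$ with no loop, with incoming/outgoing arc sets $\{S_i\},\{S_o\}$ and with $N^{-}(x),N^{+}(x)$ its in-/out-neighbour sets in $D$, I would replace the two GBSs $\binom{\{S_i\}}{x}$, $\binom{x}{\{S_o\}}$ by $\binom{\{S_i\}\cup N^{-}(x)}{x}$ and $\binom{\{S_i\}\cup\{x\}}{\{S_o\}}$: the first now also absorbs the $D$-arcs into $x$, and the second also covers, via the product $\{S_i\}\times\{S_o\}$, exactly the $L(D)_{L}$-arcs whose common vertex is $x$ — and since $x$ carries no loop, $\{S_i\}\cap\{S_o\}=\emptyset$, so no spurious loop at an arc is created. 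The $D$-arcs leaving $x$ are absorbed instead into the in-GBSs of the out-neighbours of $x$ (each is not a source, so has an in-GBS); dually, a source's out-GBS is enlarged by its out-neighbours and a sink's in-GBS by its in-neighbours. Noting that the common vertex of any $L(D)_{L}$-arc is necessarily a connector (a source has no in-arc, a sink no out-arc), a routine incidence check then shows every arc of $S(D)$, of $D$, and of $L(D)_{L}$ with non-looped common vertex lies in some GBS, with no GBS added or emptied.

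The delicate case — the reason this section requires every loop to touch only itself, sources, and sinks — is a connector $x$ bearing a loop $l^{2}$. Here $\{S_i\}=\{l^{2}\}\cup I$ and $\{S_o\}=\{l^{2}\}\cup O$ both contain $l^{2}$, so the $L(D)_{L}$-arcs through $x$ are all of $(\{l^{2}\}\cup I)\times(\{l^{2}\}\cup O)$ \emph{except} the pair $(l^{2},l^{2})$, since the loop at the arc-vertex $l^{2}$ belongs to $L(D)$ but was deleted in $L(D)_{L}$. A single GBS $\binom{\{S_i\}}{\{S_o\}}$ would illegally produce that arc, so the two GBSs at $x$ must be asymmetric; I would take $\binom{\{l^{2},x\}\cup I}{\{x\}\cup O}$ — covering $l^{2}\to x$, $l^{2}\to O$, $x\to x$, $x\to O$, $I\to x$, $I\to O$ — together with $\binom{\{x\}\cup I}{l^{2}}$, covering $x\to l^{2}$ and $I\to l^{2}$, and in neither does $l^{2}$ sit on both sides, so $l^{2}\to l^{2}$ is correctly omitted. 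This is exactly where the loop restriction is used: it forces the arcs of $I$ to come from sources and those of $O$ to go to sinks, so the $D$-arcs at $x$ other than the loop $x\to x$ itself (already covered by the first GBS) can be offloaded to the GBSs of those sources and sinks, and nothing else in $T^{-}(D)$ forces an offending incidence onto these two GBSs.

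Finally I would tally: the family has one GBS per source, one per sink, two per connector — exactly $2v-A-B$ — none empty (each retains its $S(D)$-arcs), and all arcs of $T^{-}(D)$ covered. Hence $i(T^{-}(D)) \le 2v-A-B = i(S(D))$, which with Corollary 1 determines $i(T^{-}(D))$ exactly. The main obstacle is the looped-connector case just described — making two GBSs cover $\{S_i\}\times\{S_o\}$ with the single pair $(l^{2},l^{2})$ deleted while respecting every other incidence at $x$; the rest is bookkeeping.
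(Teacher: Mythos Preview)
Your proof is correct and follows essentially the same constructive approach as the paper: enlarge the canonical $2v-A-B$ GBS cover so that it absorbs the arcs of $D$ and of $L(D)_L$ without adding any new GBSs, handling non-loop connectors exactly as the paper does and treating looped vertices with a symmetric variant (the paper uses $\binom{\{S_i\},x}{\{S_o\},x,x^2}$ and $\binom{x^2}{x,\{S_o\}}$ where you use $\binom{I,x,x^2}{O,x}$ and $\binom{I,x}{x^2}$, which covers the same arc set). The only cosmetic difference is that the paper phrases the starting point as the already-built $M(D)$ cover rather than the $S(D)$ cover; the content and the count are identical.
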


\begin{proof} 

To cover $M(D)$ we need at most $2v(D)-A(D)-B(D)$ GBSs. We first expand the covers of $M$ that contained a looped vertex of $D$.
            The GBSs used for these structures in $M$ were of the form (for loop at $x$, denoted $x^2$):
                    \[{\{S^L_i\}\choose x, x^2}, {\{S^L_i\}, x \choose \{S^L_o\}}.\]
            We transform these GBSs by the below:
                    \[{\{S^L_i\}\choose x, x^2} \rightarrow {\{S_i\},x\choose \{S_o\},x,x^2}\]
                    and
                    \[{\{S^L_i\}, x \choose \{S^L_o\}} \rightarrow {x^2\choose x, \{S_o\}.}\]

        In addition to all of $M$ minus the loops of $L(D)$ we have covered $\text{Loops}(D)$ and have used $2v-A-B$ GBSs to do so. We still need to cover $D_L$. We show to cover the remaining edges of $D$ is free, that is we cover $D_L$ only by expanding existing GBSs. 
            
            First consider all sources or sinks, in our construction of $M(D)$, every source or sink has associated to it a star, we simply grow this star with any extra edges in our graph added to it. This leaves only connector to connector edges from $D_L$ to be covered. In $M(D)$ every non-loop connector in $D$, call it $y$, has $2$ GBSs attached to it:
            \[{\{S_i\}\choose y}, {\{S_i\},y \choose \{S_o\}}.\]
            \indent Notice the first GBS is a star with a vertex from $D$ in its center. Stars are always free to expand and we can simply add any connector to connector edges from $D_L$, to the star with incoming edges associated to one of the connectors. That is, our GBS at a non-loop connector, $y$, which has incoming vertices $v\in V_y$ in $T^-$ is:
            
        \[{\{S_i\},\{V_y\}\choose y}, {\{S_i\},y \choose \{S_o\}}.\]
            So we have shown:
                    \[i(T^-)\leq 2v-A-B.\] 

\end{proof}


\begin{lemma}
The intersection number of $T^-(D)$ is $2v-A-B$. 
\end{lemma}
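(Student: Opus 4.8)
The statement is a sandwich argument that simply collects the three preceding results, so the plan is short. First I would invoke Corollary~1, which gives the lower bound $i(T^-) \geq i(S)$. Next I would invoke the immediately preceding lemma (Lemma~7), whose proof exhibits an explicit cover and therefore yields $i(T^-) \leq 2v - A - B$; equivalently, together with Lemma~C this reads $i(T^-) \leq i(S)$. Combining the two inequalities forces $i(T^-) = i(S)$.

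The only remaining step is to substitute the known value of $i(S)$. By Lemma~C we have $i(S) = 2v - A - B$, and hence $i(T^-) = 2v - A - B$, as claimed. No case analysis, construction, or estimation is needed here: all the genuine work has already been done in the preceding lemmas — the lower bound in the reduction argument behind Corollary~1 (which rests on the fact that no GBS of $S(D)$ vanishes under the allowed reductions in $T^-$), and the upper bound in the explicit expansion of the $M(D)$-cover carried out in Lemma~7.

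There is essentially no obstacle to overcome at this point; the proof is a one-line consequence of Corollary~1, Lemma~7, and Lemma~C. The only thing to be careful about is bookkeeping of which earlier result supplies which inequality, and making sure the hypothesis on $D$ (loops touching only themselves, sources, or sinks) is the one under which both Corollary~1 and Lemma~7 were established, so that the two bounds genuinely apply to the same object $T^-(D)$.

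\begin{proof}
By Corollary~1, $i(T^-) \geq i(S)$. By the previous lemma (Lemma~7), $i(T^-) \leq i(S)$. Hence $i(T^-) = i(S)$, and by Lemma~C this common value is $2v - A - B$.
\end{proof}
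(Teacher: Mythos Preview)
Your proposal is correct and is essentially identical to the paper's own proof, which reads in full: ``By the above lemma and corollary this is obvious.'' The only slip is bookkeeping: the immediately preceding upper-bound lemma is Lemma~5 in the paper's numbering (not Lemma~7, which is the later bound $i(T)\leq i(T^-)-L$), but you correctly identify it by content and position, so the argument stands.
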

\begin{proof}
By the above lemma and corollary this is obvious.
\end{proof}


We wish to study the possible GBSs of $T^-$ which, when we add loops of the line digraph, have arcs from $S(D)$ which may be in the same GBS as arcs from other GBSs of $S(D)$, or in the case of a looped vertex, GBS from $S(D)$ which may be able to disappear. That is, we wish to study the possible GBSs which may be either grown (possibly from nothing) or reduced (possibly to nothing). We will bound the number of said GBSs.

\begin{lemma}
\[i(T)\leq i(T^-)-L .\]
\end{lemma}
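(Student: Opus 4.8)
The plan is to start from the explicit minimum GBS cover of $T^-(D)$ of size $2v-A-B$ constructed above (in the proof of Lemma 5) and, for each loop of $D$, to fuse two of its GBSs into a single one that simultaneously absorbs the extra arc of $T(D)$ created by that loop. The enabling observation is that $T(D)$ differs from $T^-(D)$ by exactly $L$ arcs: writing $T(D)=S(D)\cup L(D)\cup E(D)$ and $T^-(D)=S(D)\cup E(D)\cup L(D)_L$, their difference is $L(D)\setminus L(D)_L$, the set of loops of the line digraph; and $L(D)$ has a loop at an arc-vertex $a$ exactly when $a$ begins and ends at the same vertex, i.e.\ when $a$ is a loop of $D$. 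Hence $E(T(D))=E(T^-(D))\cup\{(x^2,x^2): x^2\text{ a loop of }D\}$, a union of $L$ new arcs, one per loop, and it is enough to exhibit a GBS cover of $T(D)$ with $(2v-A-B)-L=i(T^-)-L$ members, using $i(T^-)=2v-A-B$ from Lemma 6.

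Fix the cover $\mathcal G$ of $T^-(D)$ of size $2v-A-B$. For each looped vertex $x$ of $D$, with loop-arc $x^2$, non-loop in-arc set $\{S_i\}$ and non-loop out-arc set $\{S_o\}$, the cover $\mathcal G$ contains the two GBSs
\[ G_1=\binom{\{S_i\},\,x}{\{S_o\},\,x,\,x^2}, \qquad G_2=\binom{x^2}{x,\,\{S_o\}}. \]
Because every neighbour of $x$ is $x$ itself, a source, or a sink (the standing hypothesis), $x$ lies on no connector-to-connector arc except its own loop; so $G_1$ and $G_2$ are left untouched by the step of the construction that covers $D_L$ for free, and for two distinct loops the corresponding pairs of GBSs involve pairwise disjoint sets of vertices of $T(D)$. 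For each loop I would replace the pair $G_1,G_2$ by the single GBS
\[ G=\binom{\{S_i\},\,x,\,x^2}{\{S_o\},\,x,\,x^2}. \]

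It then remains to check that $G$ is a legitimate sub-GBS of $T(D)$ and that the modified family still covers $T(D)$; this is a one-line bipartite-product computation. Every row of $G$ indexed by a vertex of $\{S_i\}\cup\{x\}$ is already a row of $G_1$ over the same column set $\{S_o\}\cup\{x,x^2\}$, while the row of $G$ indexed by $x^2$ is the row of $G_2$ over $\{x\}\cup\{S_o\}$ together with the single extra entry in column $x^2$. Hence $E(G)=E(G_1)\cup E(G_2)\cup\{(x^2,x^2)\}$, every arc of which lies in $T(D)$ (the arcs of $G_1,G_2$ lie in $T^-(D)\subseteq T(D)$, and $(x^2,x^2)$ is one of the $L$ added loop arcs). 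Performing this substitution at all $L$ looped vertices would convert $\mathcal G$ into a family of $(2v-A-B)-L$ GBSs whose arcs cover $E(T^-(D))\cup\{(x^2,x^2):x^2\text{ a loop}\}=E(T(D))$, so $i(T)\le(2v-A-B)-L=i(T^-)-L$.

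The only point requiring care --- the main obstacle --- is the bookkeeping of the second paragraph: identifying precisely the two members of the cover that meet the loop-arc $x^2$, confirming they are neither modified by the $D_L$-covering step nor entangled with the GBSs of the (necessarily source or sink) neighbours of $x$, and checking that the fused bipartite product picks up no arc other than $(x^2,x^2)$. The hypothesis that neighbours of looped vertices are only sources, sinks, or the vertex itself is exactly what secures all of this; without it a looped vertex could lie on connector-to-connector arcs, $G_1$ and $G_2$ would carry further structure, and merging them could introduce arcs not present in $T(D)$.
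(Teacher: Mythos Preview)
Your construction is correct: the fused GBS
\[
G=\binom{\{S_i\},\,x,\,x^2}{\{S_o\},\,x,\,x^2}
\]
satisfies $E(G)=E(G_1)\cup E(G_2)\cup\{(x^2,x^2)\}$ exactly as you check, and the standing hypothesis on neighbours of looped vertices guarantees that the two GBSs $G_1,G_2$ attached to $x$ in the $T^-$ cover of Lemma~5 are untouched by the $D_L$ step (the $D_L$ arcs at $x$ are absorbed into the stars at their source or sink endpoints, not into $G_1,G_2$). Performing the fusion at each loop yields a legitimate cover of $T(D)$ with $(2v-A-B)-L=i(T^-)-L$ members. So the literal statement $i(T)\le i(T^-)-L$ is established.

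However, your argument is not the paper's argument for this lemma; it is the paper's argument for the \emph{upper bound} in Theorem~3, where precisely this fused GBS appears. The paper's own proof of Lemma~7, read carefully, is aiming at the reverse inequality $i(T)\ge i(T^-)-L$: it considers an arbitrary cover of $T^-$, asks how many GBSs can be ``reduced'' when the $L$ line-digraph loops are added, and uses Lemma~4 to argue that at most one GBS per loop can disappear (the phrase ``to get the lower bound of $i(T^-)-L$ for $i(T)$'' and the sentence in Theorem~3 ``lemma~7 establishes a lower bound that is equal to the upper'' confirm this reading). The displayed inequality in the lemma's statement is thus almost certainly a typo for $\ge$. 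So: you have given a clean, constructive proof of the inequality as written, coinciding with the upper-bound half of Theorem~3; the paper's Lemma~7 is a different, non-constructive counting argument intended for the opposite direction.
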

\begin{proof}

We focus on a single $D$-vertex loop and bound the error (how many GBSs we can reduce) that comes with adding a loop to its arc-vertex, then we sum the error over for all loops. We can do this because (in $D$) our loops only touch sources or sinks, while vertices in any GBS must have at most distance $2$ from each other. The extra vertices from $S(D)$ makes it so one GBS may not have two $D$-vertex loops (or loop related structure) in it. Furthermore, our proof of lemma $4$ makes it so that adding one loop, $l$, to $T^-$ at a loop born arc will not affect any GBS not touching $l$ or $l^2$.

For any cover, $G$, of $T^-$, let $R=\{r_i\}_{i=1}^n$ be the GBSs which can be reduced or deleted entirely when we add the loop. Note that since our graphs do not generally satisfy the $H$-condition or other nice properties, the GBS cover need not be unique. We define deletion (and resp. growth) to be when an arc from $S(D)$ is separated from (resp. added to) a GBS containing another arc from the same GBS in $S(D)$. Deletion can also be when a whole GBS ceases to exist; this is also considered a reduction. The creation of a brand new GBS is also counted as growth. This covers cases where GBSs have only $1$ edge or the cases of non-optimal coverings of $T^-$ being reduced. We call edges in $S(D)$ primitive. New-primitive edges are when the two primitive edges are from different GBS in $S$ but the same in $T$.

Let $G=\{g_i\}_{i=1}^m$ be the GBSs which are grown with the addition of the loop and at least $1$ new-primitive edge from an $r\in R$. These will be the GBSs which contain $2$ arcs from different GBSs from $S$. They must contain the added loop by our proof of lemma $4$. Since if, say, $g_i$ does not need the loop to be added to it before a new-primitive edge from an $r_j$ is, our cover of $T^-$ may be altered to still be a cover but also contain arcs from different GBSs of $S(D)$ in one GBS, namely in $g_i$, or in the case of the loop vertex GBS, the GBS may simply cease to exist. 

Let $r_i\in R$ be $\binom{\{A_i\}}{\{B_i\}}$ and $g_j\in G$ be $\binom{\{X_j\}}{\{Y_j\}}$. After we add the loop we grow $g_j$ from $r_i$ to get:
\[\binom{\{X_j\}}{\{Y_j\}} \rightarrow \binom{\{X_j\},l,m_i}{\{Y_j\},l,n_i}\]
with $m_i \subseteq A_i$ and $n_i \subseteq B_i$. The existence of $\binom{l}{\{Y_q\},n_s}$ and $\binom{\{X_q\},m}{l}$ implies that if we are reducing an optimal covering of $T^-$, then by a combination of lemma $4$, and the fact after the loop is added all edges touching both the loop from the line digraph and loop from the original digraph can be covered in $1$ GBS, we have that $m+n=2$, at least that is the case if our original covering (of $T^-$) was minimal. This can be seen by considering our construction of the covering sets of $T^-$ (in the proof of the upper bound). In this case, simply assume we delete all the GBSs which are reduced (that is, subtract $1$ from the GBS count for every loop added) to get the lower bound of $i(T^-)-L$ for $i(T)$.  

If we have $n>1$,  then we are using a suboptimal cover of $T^-$. Assume we can, for any fixed loop, result in a different lower bound than above. Let $m+n=k$ and let $I$ be the number of GBSs in this covering. We only care about this if we have $I$ is less than $G-1$ when we delete as many of the $r_i$ as possible, with the worst case being that we delete $n=k-m$ GBSs. This gives us $I-k+m \leq G-1 $. Note that implies $I \leq G+k-m-1$. However, $I$ was not optimal and by lemma $4$ we require at least $k-m-1$ extra GBS for this covering than the optimal case. This can be seen because in this case, $n=k-m$ and $n=1$ is the realized worst case (our proof of the intersection number of $T^-$) so at least $k-m-1$  more GBS than the minimum are needed to exist in this case. Now we have $G \leq I-k+m+1$. Proving in a suboptimal covering we can, at best, do the same as the optimal covering  case done above. Thus we have shown:

\[i(T)\leq i(T^-)-L.\]

\end{proof}


Next, we solve for the intersection number of $T(D)$ and show it is exactly the sum of the vertices in $D$ which are not sources added to the number of vertices which are not sinks minus the number of loops.


\begin{theorem} For $D$ with loops only touching sources or sinks, we have: $T(D)=2v-A-B-L$.
\end{theorem}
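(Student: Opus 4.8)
The plan is to establish the two inequalities $i(T)\le 2v-A-B-L$ and $i(T)\ge 2v-A-B-L$ and conclude equality. The upper bound is immediate from what precedes: Lemma~7 (asserting $i(T)\le i(T^-)-L$) together with Lemma~6 (asserting $i(T^-)=2v-A-B$) gives $i(T)\le (2v-A-B)-L$ on substitution. Concretely this is just the explicit cover built in the proof of Lemma~7: take the $2v-A-B$ GBSs covering $T^-(D)$ and observe that adjoining, for each $D$-loop $l$, the line-digraph loop at the arc-vertex $l^2$ lets the two GBSs of $S(D)$ that meet $l$ fuse into a single GBS, so one GBS is saved per loop.

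For the lower bound I would show $i(T^-)\le i(T)+L$; since $i(T^-)=2v-A-B$ by Lemma~6 this yields $i(T)\ge 2v-A-B-L$. Fix a minimum GBS cover $\mathcal G$ of $T(D)$. The arcs of $T(D)$ that are not arcs of $T^-(D)$ are exactly the $L$ loops of the line digraph, one at each arc-vertex $l^2$ with $l$ a $D$-loop. I would first note, using the hypothesis that each looped vertex of $D$ is adjacent in $D$ only to itself, to sources, or to sinks, together with the fact that any two vertices lying in a common GBS are at $D$-distance at most $2$, that a single GBS of $T(D)$ cannot contain $l_1^2$ and $l_2^2$ for distinct $D$-loops $l_1\ne l_2$; hence the sets $\mathcal G_l:=\{g\in\mathcal G: l^2 \text{ appears on both sides of } g\}$ are pairwise disjoint, and each is nonempty since the loop at $l^2$ must be covered. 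For each $l$ I would replace the GBSs of $\mathcal G_l$ by GBSs legal for $T^-(D)$, the basic move being to split a GBS $\binom{X}{Y}$ with $l^2\in X\cap Y$ into $\binom{X}{Y\setminus\{l^2\}}$ and $\binom{X\setminus\{l^2\}}{Y}$, which jointly cover every arc of $\binom{X}{Y}$ except the forbidden $l^2\to l^2$, and then argue that minimality of $\mathcal G$ forces this to cost at most one new GBS per loop. The output is a GBS cover of $T^-(D)$ of size at most $i(T)+L$, as required.

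The main obstacle is precisely that last accounting step: showing that de-looping near a given loop costs at most one extra GBS, even starting from a cover of $T(D)$ that is only globally, not locally, optimal. This is the mirror image of the error estimate already carried out in the proof of Lemma~7, and it is delicate because a GBS of $T(D)$ — unlike one of $S(D)$ or $M(D)$ — can carry vertex-to-vertex, vertex-to-arc, arc-to-vertex and arc-to-arc arcs at once, so a careless restriction can split one GBS in several places and inflate the count beyond $L$. I would control it by rerunning the case analysis behind Lemma~4 (the characterization of which arcs from distinct GBSs of $S(D)$ can share a GBS of $T^-(D)$): enumerate the ways a GBS of $T(D)$ can use the loop at $l^2$ — fusing two primitive arcs of $S(D)$, or letting the loop-vertex GBS vanish — verify that a suboptimal cover never saves more than one GBS per loop, and invoke the disjointness of the $\mathcal G_l$ so that the per-loop discrepancies add to $L$ rather than compound. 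With this in place the two bounds coincide and $i(T)=2v-A-B-L$.
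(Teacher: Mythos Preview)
Your upper bound matches the paper's: the explicit fusion of the two loop-vertex GBSs into one is precisely the construction carried out in the paper's proof of the theorem (not in Lemma~7). Be aware that Lemma~7 as stated in the paper, $i(T)\le i(T^-)-L$, has the inequality pointing the wrong way; its proof actually argues for the \emph{lower} bound $i(T)\ge i(T^-)-L$, and the theorem's last line invokes it in that role. So the lower-bound argument you sketch is essentially the content the paper already packages as Lemma~7, only run in the opposite direction: you pass from a minimum cover of $T$ to a cover of $T^-$, while the paper passes from covers of $T^-$ to covers of $T$ and bounds the possible savings.

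Your direction is the cleaner one, but the splitting step as written has a genuine gap. Replacing each $g=\binom{X}{Y}\in\mathcal G_l$ by the pair $\binom{X}{Y\setminus\{l^2\}}$, $\binom{X\setminus\{l^2\}}{Y}$ costs one extra GBS \emph{per element of} $\mathcal G_l$, so if $|\mathcal G_l|>1$ you pay more than one per loop; global minimality of $\mathcal G$ does not force $|\mathcal G_l|=1$. The Lemma~4 case analysis you propose to rerun classifies which $S(D)$-arcs can share a GBS of $T^-$; it does not bound $|\mathcal G_l|$, so it will not close this gap directly. A simple repair that removes the delicate accounting entirely: for each $g=\binom{X_g}{Y_g}\in\mathcal G_l$ drop $l^2$ only from the source side, keeping $\binom{X_g\setminus\{l^2\}}{Y_g}$, and then add a single star $\binom{l^2}{\,\bigcup_{g\in\mathcal G_l} Y_g\setminus\{l^2\}}$ to recover the lost non-loop arcs $l^2\to y$. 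This costs exactly one new GBS per loop regardless of $|\mathcal G_l|$, and combined with the disjointness of the $\mathcal G_l$ (which you argue correctly from the distance constraint and the hypothesis on loops) it gives $i(T^-)\le i(T)+L$ with no further casework.
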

\begin{proof}

 To cover $M(D)$ we need at most $2v(D)-A(D)-B(D)$ GBSs, in particular every loop needs $2$ GBS to cover it. However when we add in $\text{Loops}(D)$ we only need $1$ GBS to cover the structure at that loop. This can be seen in the below contraction of the GBS we made for $M(D)$.
            Our GBS of
                    \[{\{S^L_i\}\choose x, x^2}, {\{S^L_i\}, x \choose \{S^L_o\}}.\]
            Becomes:
                    \[{\{S_i\}, x, x^2\choose  \{S_o\}, x, x^2}.\]
            Or expressed in terms of $S^L_i$ and $S^L_o$ it is
                    \[{\{S^L_i\}, x\choose  \{S^L_o\}, x}.\]

        From $D$, we have covered the loops and have used $2v-A-B-L$ GBSs to do so. We still need to cover $D_L$. Consider all sources or sinks, in our construction of $M(D)$, every source or sink has associated to it a star, we simply grow this star with any extra edges added. This leaves only connector to connector edges from $D_L$ to be covered. In $M(D)$ every non-loop connector in $D$, call it $y$, has $2$ GBSs attached to it:\[{\{S_i\}\choose y}, {\{S_i\},y \choose \{S_o\}}.\]
            \indent Notice the first GBS is a star with a vertex from $D$ in its center. Stars are always free to expand and we can simply add any connector to connector edges from $D_L$ to the star with incoming edges associated to one of the connectors. That is, our GBS at a non-loop connector, $y$, which has incoming vertices $v\in V_y$ in $T$ is:
            
        \[{\{S_i\},\{V_y\}\choose y}, {\{S_i\},y \choose \{S_o\}}.\]
            So we have shown:
                    \[i(T(D))\leq 2v(D)-A(D)-B(D)-L.\] 

In addition lemma $7$ establishes a lower bound that is equal to the upper.

\end{proof}

\newpage
\section*{References}

\begin{enumerate}
    \item L.W. Beineke, C. M. Zamfirescu: Connection digraphs and second order line digraphs, Discrete Math 39 (1982) 237-254.
    \item P. Erd\H{o}s, A. Goodman, L. Posa: The representation of a graph by set intersections, Can. J. Math 18 (1966) 106$-$112.
    \item C. Heuchenne: Sur une certaine correspondence entre graphs, Bull. Soc. Roy. Lige 33 (1964) 743-753.
    \item E. Marczewski: Sur deux proprits des classes d'ensembles, Fund. Math 33 (1945) 303-307.
    \item M. Sen, S. Das, A.B. Roy, D.B. West: Interval digraphs - An analogue of interval graphs, J. Graph Th. 13 (1989)
    189-202.
   \item C. M. Zamfirescu: Transformations of Digraphs Viewed as Intersection Digraphs, Convexity and Discrete Geometry Including Graph Theory, eds: K. Adiprasito, I.B\'ar\'any, and C.Vilcu, Springer, 2016.

\end{enumerate}
\end{document}